\newcommand{\R}{\mathbb{R}}
\DeclareMathOperator{\diam}{diam\,}
\DeclareMathOperator{\co}{co}
\renewcommand{\geq}{\geqslant}
\renewcommand{\leq}{\leqslant}
\newcommand{\Lip}{{\mathrm{Lip}}_0}
\newcommand{\justLip}{{\mathrm{Lip}}}
\newcommand{\ext}[1]{\operatorname{ext}\left(#1\right)}
\newcommand{\strexp}[1]{\operatorname{str-exp}\left(#1\right)}
\newcommand{\preext}[1]{\operatorname{pre-ext}\left(#1\right)}
\newcommand{\dent}[1]{\operatorname{dent}\left(#1\right)}
\newtheorem{theorem}{Theorem}[section]
\newtheorem{lemma}[theorem]{Lemma}
\newtheorem{claim}[theorem]{Claim}
\newtheorem{proposition}[theorem]{Proposition}
\theoremstyle{definition}
\newtheorem{example}[theorem]{Example}
\theoremstyle{remark}
\newtheorem{remark}[theorem]{Remark}
\numberwithin{equation}{section}
\def\fnote#1{\footnote}
\def\ignora#1{}
\def\n3#1{\left\vert  \! \left\vert \! \left\vert \, #1 \, \right\vert \!
  \right\vert \! \right\vert }
\begin{document}

\title{ Some results on isometric composition operators on Lipschitz spaces }

\author{ Abraham Rueda Zoca }\thanks{The research of Abraham Rueda Zoca was supported by Vicerrectorado de Investigaci\'on y Transferencia de la Universidad de Granada in the program ``Contratos puente'', by MICINN (Spain) Grant PGC2018-093794-B-I00 (MCIU, AEI, FEDER, UE), by Junta de Andaluc\'ia Grant A-FQM-484-UGR18
and by Junta de Andaluc\'ia Grant FQM-0185}
\address{Universidad de Granada, Facultad de Ciencias.
Departamento de An\'{a}lisis Matem\'{a}tico, 18071-Granada
(Spain)} \email{ abrahamrueda@ugr.es}
\urladdr{\url{https://arzenglish.wordpress.com}}

\keywords{Composition operators; Lipschitz functions spaces; Lipschitz-free spaces; denting points}

\subjclass[2010]{Primary 47B33; Secondary 47B38, 46B20}

\maketitle \markboth{Abraham Rueda Zoca}{
  Some results on isometric composition operators on Lipschitz spaces }

\begin{abstract}
Given two metric spaces $M$ and $N$ we study, motivated by a question of N. Weaver, conditions under which an isometric composition operator $C_\phi:\Lip(M)\longrightarrow \Lip(N)$ is isometric depending on the properties of $\phi$. We obtain a complete characterisation of those operators $C_\phi$ in terms of a property of the function $\phi$ in the case that $B_{\mathcal F(M)}$ is the closed convex hull of its preserved extreme points. Also, we obtain necessary and sufficient conditions for $C_\phi$ being isometric in the case that $M$ is geodesic.
\end{abstract}

\section{Introduction}

In this paper we will analyse the question of when a composition operator between spaces of Lipschitz functions is isometric. Let us start with necessary definitions. A \emph{pointed metric space} is just a metric space $M$ in which we distinguish an element, called $0$. Given a pointed metric space $M$, we write $\Lip(M)$ to denote the Banach space of all Lipschitz maps $f:M\longrightarrow \mathbb R$ which vanish at $0$, endowed with the Lipschitz norm defined by
$$ \| f \| := \sup\left\{\frac{f(x)-f(y)}{d(x,y)} \colon x,y\in M,\, x \neq y \right\}.$$
Given two pointed metric spaces $M$ and $N$ and a Lipschitz map $\phi:N\longrightarrow M$ such that $\phi(0)=0$, then $\phi$ induces a \textit{composition operator} $C_\phi:\Lip(M)\longrightarrow \Lip(N)$ given by the equation
$$C_\phi(f)=f\circ \phi.$$
The study of this kind of operator is very present in the first version of \cite{wea} in an effort to give a characterisation of those onto linear isometries between spaces of Lispchitz functions. The study of the \textit{Lipschitz-free spaces} and its Banach space structure resulted into a very useful tool to treat this problem. Let us formally introduce these spaces.

Let $M$ be a pointed metric space. We denote by $\delta$ the canonical isometric embedding of $M$ into $\Lip(M,\R)^*$, which is given by $\langle f, \delta(x) \rangle =f(x)$ for $x \in M$ and $f \in \Lip(M,\R)$. We denote by $\mathcal{F}(M)$ the norm-closed linear span of $\delta(M)$ in the dual space $\Lip(M,\R)^*$, which is usually called the \textit{Lipschitz-free space over $M$}, see the papers \cite{god15} and \cite{gk}, and the book \cite{wea} (where it receives the name of Arens-Eells space) for background on this. It is well known that $\mathcal{F}(M)$ is an isometric predual of the space $\Lip(M,\R)$ \cite[pp. 91]{god15}. We will denote by $\delta_x:=\delta(x)$ and we will consider a \textit{molecule in $\mathcal F(M)$} as an element of the form
$$m_{x,y}:=\frac{\delta_x-\delta_y}{d(x,y)}$$
for $x,y\in M$ such that $x\neq y$.

Coming back to Lipschitz functions, let us recall that when $M$ and $N$ are pointed metric space, it is well known that every Lipschitz function $f \colon N \longrightarrow M$ which preserves the origin can be isometrically identified with the continuous linear operator $\widehat{f} \colon \mathcal{F}(N) \longrightarrow \mathcal F(M)$ defined by $\widehat{f}(\delta_p)=\delta_{f(p)}$ for every $p \in M$. This mapping completely identifies the spaces $\Lip(M,Y)$ and $\mathcal{L}(\mathcal{F}(M),Y)$. 

Anyway, under this point of view, if $\phi:N\longrightarrow M$ is a Lipschitz mapping, then $C_\phi$ is nothing but the adjoint operator of $\hat\phi$. That is the reason why the space $\mathcal F(M)$ in general and the extremal structure of its unit ball in particular are extremely useful when dealing with operators between spaces of Lipschitz functions. For instance, by making use of the study of the preserved extreme points of $B_{\mathcal F(M)}$, it is characterised in \cite[Theorem 3.56]{wea} the surjective linear isometries between spaces of Lipschitz functions over uniformly convave metric spaces (see \cite[Theorem 3.56]{wea} for details).

In connection with the composition operators, N. Weaver wondered in \cite[pp. 53]{wea} by a characterisation of those $C_\phi$ which are isometries in terms of a condition on the defining Lipschitz function $\phi$. Very recently, A. Jim\'enez-Vargas obtained in \cite{jiva} a characterisation in the following sense: if $\phi:N\longrightarrow M$ is a norm-one Lipschitz function and $M$ has the so-called \textit{peaking property}, then $C_\phi:\Lip(M)\longrightarrow \Lip(N)$ is isometric if, and only if, for every pair of points $x,y\in M, x\neq y$ we can find sequences $x_n\subseteq N$ and $y_n\subseteq N$ such that $\phi(x_n)\rightarrow x$, $\phi(y_n)\rightarrow y$ and 
$$\frac{d(\phi(x_n),\phi(y_n))}{d(x_n,y_n)}\rightarrow 1.$$
To see how extremal structure of $\mathcal F(M)$ appears in the above mentioned theorem, let us explain what is the peaking property. We say that $M$ has the peaking property if, for every pair of distinct points $x,y\in M, x\neq y$, there exists a function $f_{x,y}\in S_{\Lip(M)}$ (which is said to \textit{peak the pair $(x,y)$}) satisfying that $f_{x,y}(m_{x,y})=1$ and so that, for every open set $U$ in $M^2\setminus \Delta=\{(x,y)\in M^2: x\neq y\}$ containing $(x,y)$ and $(y,x)$, then there exists a $\delta>0$ such that
$$\vert f_{x,y}(m_{u,v)}\vert\leq 1-\delta$$
if $(u,v)\notin U$.

The connection between this property and the extremal structure of $\mathcal F(M)$ is \cite[Theorem 5.4]{gpr}, where it is proved that a function $f$ peakes a pair $(x,y)$ if, and only if, the element $m_{x,y}$ is a strongly exposed point in $\mathcal F(M)$ and $\hat f_{x,y}$ is a strongly exposing functional for $m_{x,y}$. So, in the language of \cite[Theorem 5.4]{gpr}, the peaking property of $M$ can be reformulated as the fact that $m_{x,y}$ is a strongly exposed point of $B_{\mathcal F(M)}$ for every pair of different points $x,y\in M$.

With the previous information in mind, the main aim of Section \ref{section:dientes} is to generalise the above mentioned result \cite[Theorem 2.4]{jiva} and to prove that, if $\mathcal F(M)$ is the closed convex hull of its preserved extreme points (see formal definition below), then given a norm-one Lipschitz function $\phi:N\longrightarrow M$ we have that $C_\phi$ is isometric if, and only if, for every pair of different points $x,y\in M$ such that $m_{x,y}$ is a preserved extreme point, we can find a pair of sequences $x_n$ and $y_n$ in $N$ so that $\phi(x_n)\rightarrow x, \phi(y_n)\rightarrow y$ and
$$\frac{d(\phi(x_n),\phi(y_n))}{d(x_n,y_n)}\rightarrow 1.$$
This proves that the equivalence established in \cite[Theorem 2.4]{jiva} actually works for a rather larger class of metric spaces $M$ (see Example \ref{exam:aplidientes}).

To shorten, we can say that in Section \ref{section:dientes} we are studying composition operators $C_\phi:\Lip(M)\longrightarrow \Lip(N)$ for metric spaces $M$ such that $\mathcal F(M)$ has a rich extremal structure (the set of denting points of the unit ball of $B_{\mathcal F(M)}$ is norming). In Section \ref{section:length} we aim to study the extremely oposite case, that is, the case when $\mathcal F(M)$ does not contain any denting point. According to \cite[Theorem 1.5]{am}, given a complete metric space $M$, then the unit ball of $\mathcal F(M)$ does not have any preserved extreme point if, and only if, $M$ is \textit{length} (see formal definition below). Because of this reason, we will first study the composition operators $C_\phi:\Lip([0,1])\longrightarrow \Lip(N)$. Though we do not obtain a complete characterisation in this case, we will obtain necessary and sufficient conditions in Propositions \ref{proponece[0,1]} and \ref{prop:sufi[0,1]} which are closely related to an abundance of points in $N$ where, roughly speaking, $\phi$ has derivative exactly one. To be more precise, we get for instance in Proposition \ref{prop:sufi[0,1]}, that $C_\phi$ is isometric if $\phi(N)$ has length one and, for every $t\in\phi(N)$, there exists $x\in N$ such that $\phi(x)=t$ and that
$$\limsup\limits_{y\rightarrow x, y\neq x}\frac{d(\phi(y),\phi(x))}{d(y,x)}=1.$$
Finally, in Theorems \ref{theo:maincondinece} and \ref{theo:maincondisufi} we obtain necessary and sufficient conditions, which are closely related to the case of $M=[0,1]$, under which $C_\phi:\Lip(M)\longrightarrow \Lip(M)$ is an isometry when $M$ is geodesic covering, in particular, all the length metric spaces which are compact.

\bigskip

\textbf{Notation:} We will only consider real Banach spaces. Given a Banach space $X$ we will denote by $B_X$ and $S_X$ the closed unit ball and the closed unit sphere. Also, $X^*$ stands for the topological dual of $X$. A \textit{slice} of the unit ball $B_X$ is a non-empty intersection of an open half-space with $B_X$; every slice can be written in the form \[
S(B_X,f,\beta):=\{x\in B_X \colon f(x)>1-\beta\},
\]
where $f \in S_{X^*}$, $\beta>0$.

The notations $\ext{B_X}$, $\preext{B_X}$, $\strexp{B_X}$ stand for the set of extreme points, preserved extreme points (i.e.\ extreme points which remain extreme in the bidual ball), and strongly exposed points of $B_X$, respectively. A point $x \in B_X$ is said to be a \emph{denting point} of $B_X$ if there exist slices of $B_X$ containing $x$ of arbitrarily small diameter. We will denote by $\dent{B_X}$ the set of denting points of $B_X$. We always have that
$$
\strexp{B_X}\subset \dent{B_X} \subset \preext{B_X} \subset \ext{B_X}.
$$ 
The study of the extremal structure in the paricular case of being $X$ a Lipschitz-free space has experimented a recent and intense research (see e.g. \cite{ag,ap,gppr,gpr}). Among all this research, particularly useful in the main result of Section \ref{section:dientes} is \cite[Theorem 2.4]{gppr}, which establishes that every preserved extreme point is a denting point in a Lipschitz-free space.

Given a metric space $M$, we say that $M$ is \textit{length} if, for every pair of distinct points $x,y\in M$, then $d(x,y)$ is equal to the infimum of the length of the rectifiable curves joining them. If such infimum is actually a minimum for every pair of points we will say that $M$ is \textit{geodesic}. See \cite{bh} for background on length spaces. In the context of Lipschitz-free spaces, these notions have been used in connection with the \textit{Daugavet property} in the papers \cite{am,gpr,ikw}.

Let $M$ be a metric space and $f:M\longrightarrow \mathbb R$ be a Lipschitz function. According to \cite{duja}, the \textit{pointwise Lipschitz constant of $f$} at a non-isolated point $x\in M$ is defined as
$$\justLip f(x):=\limsup\limits_{y\rightarrow x, y\neq x}\frac{f(y)-f(x)}{d(y,x)},$$
and it is defined $\justLip f(x)=0$ if $x$ is an isolated point. See \cite{duja} and references therein for background on pointwise Lipschitz constants.

Let us end the section with some notation about \textit{generalised derivatives} which will be used in Example \ref{exam:clarke}. Let $X$ be a Banach space and $f:X\longrightarrow \mathbb R$ a
Lipschitz function. According to \cite{cla}, the \textit{generalized derivative of at a point $x\in X$ in the direction $v\in X$} is defined by
$$f^\circ(x,v):=\limsup\limits_{ y\rightarrow x,
t\searrow 0}\frac{f(y+tv)-f(y)}{t}.$$
Such a limit always exists from the Lipschitz condition. Moreover, it is a sublinear and positively homogeneous function in the variable $v$ \cite[Proposition 2.1.1]{cla}. In addition, the generalized gradient of $f$ at $x$ is defined as
follows
$$\partial f(x):=\{x^*\in X^*: f^\circ(x,v)\geq x^*(v)\ \forall v\in X\}.$$
Given $v\in X$ it follows that \cite[Proposition 2.1.2]{cla}
$$f^\circ(x,v)=\max\limits_{x^*\in \partial f(x)}x^*(v)\ \ \forall x\in X.$$
According to \cite[Definition 2.3.4]{cla}, $f$ is said to be \textit{regular at $x$} if:
\begin{enumerate}
\item For every $v\in X$, the directional derivative $f'(x,v)$ exists, and,
\item $f'(x,v)=f^\circ (x,v)$ holds for every $v\in X$.
\end{enumerate}
We refer to \cite[Proposition 2.6.6]{cla} for examples.

\section{Isometric composition operators and denting points}\label{section:dientes}

Let $M$ and $N$ be two (complete) pointed metric spaces and $\phi:N\longrightarrow M$ be a norm-one Lipschitz function such that $\phi(0)=0$. Let $C_\phi: \Lip(M)\longrightarrow \Lip(N)$ given by $C_\phi(f)=f\circ \phi$. We wonder under which conditions $C_\phi$ is an isometry. To begin with, let us start with the following result, which is a slight modification of \cite[Theorem 2.1]{jiva} which will be used in the sequel.

\begin{proposition}\label{propocaranorma}
Let $A\subseteq M^2\setminus\Delta$ such that:
\begin{enumerate}
\item The set $\{m_{x,y}: (x,y)\in A\}$ is norming for $\Lip(M)$.
\item For every $(x,y)\in A$ there exists a pair of sequences $\{x_n\},\{y_n\}$ in $N$ such that $\phi(x_n)\neq \phi(y_n)$ holds for every $n\in\mathbb N$, $\phi(x_n)\rightarrow x, \phi(y_n)\rightarrow y$ and 
$$\frac{d(\phi(x_n),\phi(y_n))}{d(x_n,y_n)}\rightarrow 1.$$
\end{enumerate}
Then $C_\phi$ is an isometry.
\end{proposition}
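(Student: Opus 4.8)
The plan is to fix an arbitrary $f\in\Lip(M)$ and show $\|C_\phi(f)\|=\|f\|$; only the inequality $\|C_\phi(f)\|\geq\|f\|$ requires work. Indeed, since $\phi$ is a norm-one Lipschitz map, for distinct $u,v\in N$ one has $|f(\phi(u))-f(\phi(v))|\leq\|f\|\,d(\phi(u),\phi(v))\leq\|f\|\,d(u,v)$, so $\|C_\phi(f)\|\leq\|f\|$ holds automatically for every $f$.

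For the reverse inequality, let $\varepsilon>0$. Hypothesis (1)---the norming property of $\{m_{x,y}:(x,y)\in A\}$, which unravels to $\|f\|=\sup_{(x,y)\in A}\frac{|f(x)-f(y)|}{d(x,y)}$ because $\langle f,m_{x,y}\rangle=\frac{f(x)-f(y)}{d(x,y)}$---yields a pair $(x,y)\in A$ with $\frac{|f(x)-f(y)|}{d(x,y)}>\|f\|-\varepsilon$; note that $d(x,y)>0$ since $A\subseteq M^2\setminus\Delta$. Applying hypothesis (2) to this pair gives sequences $\{x_n\},\{y_n\}\subseteq N$ with $\phi(x_n)\neq\phi(y_n)$ (hence $x_n\neq y_n$) for all $n$, with $\phi(x_n)\rightarrow x$, $\phi(y_n)\rightarrow y$, and $\frac{d(\phi(x_n),\phi(y_n))}{d(x_n,y_n)}\rightarrow 1$.

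The crux is the factorisation, valid for each $n$,
$$\|C_\phi(f)\|\geq\frac{|f(\phi(x_n))-f(\phi(y_n))|}{d(x_n,y_n)}=\frac{|f(\phi(x_n))-f(\phi(y_n))|}{d(\phi(x_n),\phi(y_n))}\cdot\frac{d(\phi(x_n),\phi(y_n))}{d(x_n,y_n)}.$$
Letting $n\rightarrow\infty$ and using that $f$ and the metric $d$ are continuous: the first factor on the right tends to $\frac{|f(x)-f(y)|}{d(x,y)}$, where $d(\phi(x_n),\phi(y_n))\rightarrow d(x,y)>0$ keeps the quotient well-behaved, while the second factor tends to $1$ by hypothesis. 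Hence $\|C_\phi(f)\|\geq\frac{|f(x)-f(y)|}{d(x,y)}>\|f\|-\varepsilon$, and since $\varepsilon>0$ was arbitrary, $\|C_\phi(f)\|\geq\|f\|$. Combined with the first paragraph, $\|C_\phi(f)\|=\|f\|$ for every $f$, so $C_\phi$ is an isometry.

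I do not expect a serious obstacle here: the argument is a soft limiting computation, essentially the mechanism of \cite[Theorem 2.1]{jiva}. The only points needing care are that all denominators stay nonzero---which is exactly why hypothesis (2) insists on $\phi(x_n)\neq\phi(y_n)$---and that the pair produced by the norming set lies in $A\subseteq M^2\setminus\Delta$, so that $d(x,y)>0$ and the relevant limit is a genuine quotient rather than $0/0$. The conceptual content, that norming molecules $m_{x,y}$ of $\mathcal F(M)$ can be approximately pulled back along $\phi$ into $N$, is precisely what will later make condition (2) unavoidable in the characterisations of Section \ref{section:dientes}.
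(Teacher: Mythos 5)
Your proof is correct and follows essentially the same route as the paper's: the trivial inequality $\|C_\phi(f)\|\leq\|f\|$ from $\|\phi\|\leq 1$, the choice of a pair $(x,y)\in A$ nearly attaining the norm via the norming hypothesis, and the two-factor decomposition of the difference quotient whose factors converge to $\frac{f(x)-f(y)}{d(x,y)}$ and to $1$ respectively. The only (immaterial) differences are that you work with an arbitrary $f$ and absolute values where the paper normalises $f\in S_{\Lip(M)}$, and that you make explicit the nonvanishing of the denominators, which the paper leaves implicit.
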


\begin{proof}
Let $f\in S_{\Lip(M)}$. Clearly 
$$\Vert C_\phi(f)\Vert=\Vert f\circ\phi\Vert\leq \Vert f\Vert \Vert \phi\Vert=\Vert f\Vert.$$
In order to prove the reverse inequality, pick $\varepsilon>0$ and choose $(x,y)\in A$ such that $f(m_{x,y})>1-\varepsilon=\Vert f\Vert-\varepsilon$. By assumptions we can find a pair of sequences $\{x_n\}, \{y_n\}$ in $N$ such that $\phi(x_n)\rightarrow x, \phi(y_n)\rightarrow y$ and 
$$\frac{d(\phi(x_n),\phi(y_n))}{d(x_n,y_n)}\rightarrow 1.$$
Now
\[
\begin{split}
\Vert f\circ \phi\Vert& \geq \frac{f(\phi(x_n))-f(\phi(y_n))}{d(x_n,y_n)}\\
& = \frac{f(\phi(x_n))-f(\phi(y_n))}{d(\phi(x_n),\phi(y_n))} \frac{d(\phi(x_n),\phi(y_n)}{d(x_n,y_n)}\\
& \rightarrow \frac{f(x)-f(y)}{d(x,y)}>\Vert f\Vert-\varepsilon.
\end{split}
\]
Since $\varepsilon>0$ was arbitrary we conclude that $\Vert f\circ\phi\Vert\geq \Vert f\Vert$, and we are done.
\end{proof}

In \cite[Theorem 2.4]{jiva} it is characterised the isometric composition operators $C_\phi:\Lip(M)\longrightarrow \Lip(N)$ when $M$ satisfies the peak property which, according to \cite[Theorem 5.4]{gpr}, means nothing but every molecule $m_{x,y}\in \mathcal F(M)$ is a strongly exposed point. A generalisation of the above result is the following theorem.

\begin{theorem}
Assume that $B_{\mathcal F(M)}=\overline{\co}(\preext{B_{\mathcal F(M)}})$. Then $C_\phi$ is an isometry if, and only if, for every $(x,y)\in M^2\setminus\Delta$ such that $m_{x,y}\in \preext{B_{\mathcal F(M)}}$, there exists a pair of sequences $\{x_n\}, \{y_n\}\in N$ such that $\phi(x_n)\rightarrow x, \phi(y_n)\rightarrow y$ and 
$$\frac{d(\phi(x_n),\phi(y_n))}{d(x_n,y_n)}\rightarrow 1.$$
\end{theorem}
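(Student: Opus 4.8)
The plan is to obtain the ``if'' implication from Proposition~\ref{propocaranorma} and the ``only if'' implication from the fact that a preserved extreme molecule is a denting point, together with a localisation principle for molecules. For the \emph{``if''} direction I would apply Proposition~\ref{propocaranorma} to the set $A:=\{(x,y)\in M^2\setminus\Delta:\ m_{x,y}\in\preext{B_{\mathcal F(M)}}\}$. Since every preserved extreme point of $B_{\mathcal F(M)}$ is a molecule (see e.g.\ \cite{gppr}), one has $\{m_{x,y}:(x,y)\in A\}=\preext{B_{\mathcal F(M)}}$, and this set is norming for $\Lip(M)=\mathcal F(M)^*$: for $f\in\Lip(M)$, linearity and norm-continuity give $\|f\|=\sup_{B_{\mathcal F(M)}}f=\sup_{\cco(\preext{B_{\mathcal F(M)}})}f=\sup_{\preext{B_{\mathcal F(M)}}}f$. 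Thus condition~(1) of Proposition~\ref{propocaranorma} holds, and condition~(2) is exactly the hypothesis of the theorem once one notes that the limit $1>0$ of the quotients forces $\phi(x_n)\neq\phi(y_n)$ for all large $n$, so that after discarding finitely many terms we are in the setting of that proposition. Hence $C_\phi$ is an isometry.

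For the \emph{``only if''} direction, fix $(x,y)$ with $m_{x,y}\in\preext{B_{\mathcal F(M)}}$; by \cite[Theorem~2.4]{gppr} it is a denting point of $B_{\mathcal F(M)}$. For each $n$ choose $f_n\in S_{\Lip(M)}$ and $\beta_n>0$ with $f_n(m_{x,y})>1-\beta_n$ and $\diam S(B_{\mathcal F(M)},f_n,\beta_n)<1/n$. As $C_\phi$ is an isometry, $\|f_n\circ\phi\|=1$, so there are $x_n\neq y_n$ in $N$ with $\frac{f_n(\phi(x_n))-f_n(\phi(y_n))}{d(x_n,y_n)}>1-\eta_n$ for $\eta_n:=\tfrac12\min\{1/n,\beta_n\}$. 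Writing this quotient as $f_n(m_{\phi(x_n),\phi(y_n)})\cdot\frac{d(\phi(x_n),\phi(y_n))}{d(x_n,y_n)}$ (in particular $\phi(x_n)\neq\phi(y_n)$), both factors lie in $(0,1]$ and hence both exceed $1-\eta_n$: the first yields $m_{\phi(x_n),\phi(y_n)}\in S(B_{\mathcal F(M)},f_n,\beta_n)$, whence $\|m_{\phi(x_n),\phi(y_n)}-m_{x,y}\|<1/n$, while the second yields $\frac{d(\phi(x_n),\phi(y_n))}{d(x_n,y_n)}\to 1$.

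It remains to upgrade $\|m_{\phi(x_n),\phi(y_n)}-m_{x,y}\|\to 0$ to $\phi(x_n)\to x$ and $\phi(y_n)\to y$, which I would isolate as a lemma: for all $u\neq v$ in $M$, if $a_n\neq b_n$ in $M$ with $\|m_{a_n,b_n}-m_{u,v}\|\to 0$, then $a_n\to u$ and $b_n\to v$. Testing $m_{a_n,b_n}$ against the $1$-Lipschitz functions $d(\cdot,v)$ and $-d(\cdot,u)$ (normalised to vanish at $0$; each takes value $1$ at $m_{u,v}$), adding, and using $d(a_n,v)-d(a_n,u)\le d(u,v)$ and $d(b_n,u)-d(b_n,v)\le d(u,v)$ gives $\limsup_n d(a_n,b_n)\le d(u,v)$. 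Then, testing against the $1$-Lipschitz ``bump'' $h_n(t):=\max\{0,d(a_n,b_n)-d(t,a_n)\}$ (again normalised at $0$), one has $\langle h_n,m_{a_n,b_n}\rangle=1$, hence $\langle h_n,m_{u,v}\rangle\to 1$ since $\|h_n\|\le 1$ and $m_{a_n,b_n}\to m_{u,v}$ in norm; this reads $\liminf_n\max\{0,d(a_n,b_n)-d(u,a_n)\}\ge d(u,v)$, which together with $\limsup_n d(a_n,b_n)\le d(u,v)$ forces $d(u,a_n)\to 0$ and then $d(a_n,b_n)\to d(u,v)$. Consequently $\delta_{a_n}-\delta_{b_n}=d(a_n,b_n)\,m_{a_n,b_n}\to d(u,v)\,m_{u,v}=\delta_u-\delta_v$ in $\mathcal F(M)$ while $\delta_{a_n}\to\delta_u$, so $\delta_{b_n}\to\delta_v$, i.e.\ $b_n\to v$. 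Applying this with $a_n=\phi(x_n)$, $b_n=\phi(y_n)$ and $(u,v)=(x,y)$ closes the proof.

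The main obstacle is precisely this lemma: a slice of small diameter only localises the \emph{molecule} $m_{\phi(x_n),\phi(y_n)}$ near $m_{x,y}$, and the assignment $(p,q)\mapsto m_{p,q}$ is in general badly behaved in the reverse direction, so one cannot directly read off convergence of the base points. The bump functions are what transfer norm-closeness of molecules into metric closeness of base points, and the delicate part is running the $o(1)$-estimates in the right order ($\limsup d(a_n,b_n)\le d(u,v)$, then $a_n\to u$, then $d(a_n,b_n)\to d(u,v)$, and only then $b_n\to v$).
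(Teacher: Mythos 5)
Your proof is correct and follows essentially the same route as the paper: the ``if'' part via Proposition \ref{propocaranorma} (with the same norming argument for the preserved extreme molecules), and the ``only if'' part via \cite[Theorem 2.4]{gppr}, small slices around the denting point $m_{x,y}$, and the factorisation of $\frac{f_n(\phi(x_n))-f_n(\phi(y_n))}{d(x_n,y_n)}$ into the two sub-unit factors. The single point of divergence is the final localisation step: the paper simply invokes \cite[Lemma 4.1.13]{lctesis}, which gives the quantitative inequality $\Vert m_{p,q}-m_{x,y}\Vert\geq \max\{d(p,x),d(q,y)\}/d(x,y)$ and hence convergence of the base points immediately, whereas you reprove the qualitative statement from scratch by testing against $d(\cdot,v)$, $-d(\cdot,u)$ and the bump $\max\{0,d(a_n,b_n)-d(\cdot,a_n)\}$. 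Your lemma is correct (the order of the estimates --- first $\limsup d(a_n,b_n)\leq d(u,v)$, then $a_n\to u$, then $d(a_n,b_n)\to d(u,v)$, then $b_n\to v$ via $\delta_{a_n}-\delta_{b_n}\to\delta_u-\delta_v$ --- does work), so this buys self-containedness at the cost of length; the cited lemma is stronger but your argument suffices for the theorem.
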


\begin{proof} The ``if'' part follows from Proposition \ref{propocaranorma}. For the converse, given $(x,y)\in M^2\setminus \Delta$ such that $m_{x,y}$ is a preserved extreme point, then $m_{x,y}$ is a denting point \cite[Theorem 2.4]{gppr}. Hence, for every $n\in\mathbb N$, we can find $f_n\in S_{\Lip(M)}$ and $\beta_n>0$ such that $f_n(m_{x,y})>1-\beta_n$ and that
$$\diam(S(B_{\mathcal F(M)},f_n,\beta_n))<\frac{1}{n}.$$
Now, given $n\in\mathbb N$, it follows that $\Vert C_\phi(f_n)\Vert=\Vert f_n\Vert=1$. Consequently, we can find a pair of sequences $x_k^n, y_k^n$ in $N$ such that 
$$\frac{f_n(\phi(x_k^n))-f_n(\phi(y_k^n))}{d(x_k^n,y_k^n)}\mathop{\longrightarrow}\limits^{k\rightarrow \infty} 1.$$
Since, for every $k\in\mathbb N$, we have 
$$\frac{f_n(\phi(x_k^n))-f_n(\phi(y_k^n))}{d(x_k^n,y_k^n)}\leq \frac{f(\phi(x_k^n))-f(\phi(y_k^n))}{d(\phi(x_k^n),\phi(y_k^n))} \frac{d(\phi(x_k^n),\phi(y_k^n))}{d(x_k^n,y_k^n)},$$
we get that both of the previous factors converge to 1. Hence, for every $n\in\mathbb N$, we can find $\sigma(n)\in\mathbb N$ such that
$$\frac{d(\phi(x_{\sigma(n)}^n),\phi(y_{\sigma(n)}^n))}{d(x_{\sigma(n)}^n,y_{\sigma(n)}^n)}>1-\frac{1}{n},$$
and
$$\frac{f_n(\phi(x_{\sigma(n)}^n)-f_n(\phi(y_{\sigma(n)}^n))}{d(\phi(x_{\sigma(n)}^n),\phi(y_{\sigma(n)}^n))}>1-\beta_n.$$
Now the second condition implies that $m_{\phi(x_{\sigma(n)}^n),\phi( y_{\sigma(n)}^n)}\in S(B_{\mathcal F(M)},f_n,\beta_n)$, which has diameter smaller than $\frac{1}{n}$. Consequently $\Vert m_{\phi(x_{\sigma(n)}^n),\phi( y_{\sigma(n)}^n)}-m_{x,y}\Vert<\frac{1}{n}$. Now \cite[Lemma 4.1.13]{lctesis} implies that 
$$\frac{1}{n}>\Vert m_{\phi(x_{\sigma(n)}^n),\phi( y_{\sigma(n)}^n)}-m_{x,y}\Vert\geq \frac{\max\{d(\phi(x_{\sigma(n)}^n),x),
d(\phi(y_{\sigma(n)}^n),y) \}}{d(x,y)}$$
holds for every $n\in\mathbb N$. So, taking $x_n:= x_{\sigma(n)}^n$ and $y_n:=y_{\sigma(n)}^n$ we get that $\phi(x_n)\rightarrow x, \phi(y_n)\rightarrow y$ and $\frac{d(\phi(x_n),\phi(y_n))}{d(x_n,y_n)}\rightarrow 1$, and we are done.
\end{proof}

Let us see examples below where the previous theorem applies.

\begin{example}\label{exam:aplidientes}
$B_{\mathcal F(M)}=\overline{\co}(\preext{B_{\mathcal F(M)}})$ in the following cases:
\begin{enumerate}
    \item If $\mathcal F(M)$ has the RNP. In particular, when $M$ is compact and H\"older \cite{wea} or when $M$ is uniformy discrete \cite{kalton}.
    \item If $M$ is the unit circle in $\mathbb R^2$ \cite[Theorem 2.1]{cgmr}.
    \item If $M$ is boundedly compact (i.e. if every closed ball in $M$ is compact) and $\operatorname{SNA}(M,\mathbb R)$, the set of those Lipchitz functions which strongly attain its norm (see \cite{god15,kms} for background), is dense in $\Lip(M,\mathbb R)$ \cite[Corollary 3.21]{cgmr}.
\end{enumerate}
\end{example}

\section{Isometric composition operators into geodesic spaces}\label{section:length}

Let us start with a study of the isometric composition operators $\phi:N\longrightarrow [0,1]$. A neccesary condition is established in the following proposition.

\begin{proposition}\label{proponece[0,1]}
Let $N$ be a metric space and $\phi:N\longrightarrow [0,1]$ be a 1-Lipschitz function. If $C_\phi$ is an isometry, then for every $x\in [0,1]$ we can find a pair of sequences $x_n, y_n$ in $N$ such that
\begin{enumerate}
    \item $\phi(x_n)\rightarrow x$ and $\phi(y_n)\rightarrow x$.
    \item $\frac{d(\phi(x_n),\phi(y_n))}{d(x_n,y_n)}\rightarrow 1.$
\end{enumerate} \end{proposition}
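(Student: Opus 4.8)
The plan is to probe the isometry $C_\phi$ against a family of ``ramp'' functions on $[0,1]$ concentrated ever more tightly around the given point $x$, and then to diagonalize. Fix $x\in[0,1]$; for each $k\in\mathbb N$ I would set $\delta_k:=\frac1k$ and let $f_k\in\Lip([0,1])$ be the $1$-Lipschitz function that vanishes on $[0,x-\delta_k]$, increases with slope exactly $1$ on $[x-\delta_k,x+\delta_k]$, and is constant on $[x+\delta_k,1]$ (truncating in the obvious way when $x\pm\delta_k$ falls outside $[0,1]$, so that $f_k(0)=0$; the endpoints $x=0,1$ are covered by the same kind of ramp). The relevant features of $f_k$ are: $f_k(0)=0$, $\Vert f_k\Vert=1$, $f_k$ is non-decreasing, $0\leq f_k(u)-f_k(v)\leq\min\{u-v,\,2\delta_k\}$ for $v\leq u$ in $[0,1]$, and $f_k(u)>f_k(v)$ forces $[v,u]\cap[x-\delta_k,x+\delta_k]\neq\emptyset$.

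Next, since $C_\phi$ is an isometry, $\Vert f_k\circ\phi\Vert=\Vert f_k\Vert=1$, so I can pick (for each $k\geq2$) points $x_k,y_k\in N$ with $x_k\neq y_k$ and
\[
\frac{f_k(\phi(x_k))-f_k(\phi(y_k))}{d(x_k,y_k)}>1-\frac1k .
\]
This quotient is positive, so $f_k(\phi(x_k))>f_k(\phi(y_k))$, and monotonicity of $f_k$ gives $\phi(x_k)>\phi(y_k)$. Because $\phi$ is $1$-Lipschitz and $f_k(u)-f_k(v)\leq u-v$,
\[
1\geq\frac{d(\phi(x_k),\phi(y_k))}{d(x_k,y_k)}=\frac{\phi(x_k)-\phi(y_k)}{d(x_k,y_k)}\geq\frac{f_k(\phi(x_k))-f_k(\phi(y_k))}{d(x_k,y_k)}>1-\frac1k,
\]
so the ratio in (2) tends to $1$.

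Finally, for (1): from $f_k(\phi(x_k))-f_k(\phi(y_k))\leq2\delta_k$ and the last display I get $d(x_k,y_k)<2\delta_k/(1-\frac1k)\to0$, hence also $0<\phi(x_k)-\phi(y_k)\leq d(x_k,y_k)\to0$. Choosing $p_k\in[\phi(y_k),\phi(x_k)]\cap[x-\delta_k,x+\delta_k]$ (possible since $f_k(\phi(x_k))>f_k(\phi(y_k))$),
\[
\vert\phi(x_k)-x\vert\leq\vert\phi(x_k)-p_k\vert+\vert p_k-x\vert\leq(\phi(x_k)-\phi(y_k))+\delta_k\longrightarrow0,
\]
and the analogous bound with $\phi(y_k)$ on the left gives $\vert\phi(y_k)-x\vert\to0$. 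So $\phi(x_k)\to x$ and $\phi(y_k)\to x$, as required. I expect the only delicate point to be exactly this convergence of the images: a single ramp of fixed width does not force a near-maximizing pair to sit near $x$ (the pair could cluster at the two ends of $[x-\delta,x+\delta]$), so it is essential that the widths $\delta_k$ shrink to $0$ — this is what simultaneously drives $d(x_k,y_k)\to0$ and pins both images to $x$.
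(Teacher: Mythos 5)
Your argument is correct. It proves the proposition by the same underlying strategy as the paper --- probing the isometry with norm-one test functions whose almost-norming molecules are forced to concentrate at $x$ --- but the execution is genuinely different. The paper uses a \emph{single} test function, $g(t)=\int_x^t (1-\vert x-s\vert)\,ds$, whose derivative $1-\vert x-t\vert$ peaks only at $t=x$; one maximizing sequence for $\Vert C_\phi(g)\Vert=1$ then does the whole job after factoring the difference quotient through $\vert\phi(x_n)-\phi(y_n)\vert$, with no diagonalization. You instead use a shrinking family of piecewise-linear ramps $f_k$ of width $2\delta_k$ and extract one near-norming pair per $k$; the price is the diagonal bookkeeping, but the payoff is an elementary, fully quantitative argument: your estimate $d(x_k,y_k)<2\delta_k/(1-\tfrac1k)$ makes explicit that $d(x_k,y_k)\to 0$ (which the paper's proof also yields but does not record), and your closing observation --- that a single ramp of fixed width cannot pin a near-maximizing pair near $x$, so the widths must shrink --- correctly identifies the one delicate point of your route. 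All the individual steps check out, including the sign normalization (the quotient being $>1-\tfrac1k>0$ forces $f_k(\phi(x_k))>f_k(\phi(y_k))$ and hence $\phi(x_k)>\phi(y_k)$) and the truncation at the endpoints $x=0,1$.
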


\begin{proof}
Pick $x\in [0,1]$. Consider $g:[0,1]\longrightarrow [0,1]$ defined by the equation 
$$g(t):=\int_x^t  1-\vert x-s\vert\ ds,$$ which is a norm-one Lipschitz function. Also, notice that if $u_n, v_n\in [0,1]$ satisfies that $g(m_{u_n,v_n})\rightarrow 1$ then $u_n\rightarrow x$ and $v_n\rightarrow x$. Since we are assuming that $C_\phi$ is an isometry we can find $x_n, y_n\in N$ such that $C_\phi(g)(m_{x_n,y_n})\rightarrow 1$. Now
$$\frac{g(\phi(x_n))-g(\phi(y_n))}{d(x_n,y_n)}\leq \frac{g(\phi(x_n))-g(\phi(y_n))}{\vert \phi(x_n)-\phi(y_n)\vert}\frac{\vert \phi(x_n)-\phi(y_n)\vert}{d(x_n,y_n)}.$$
This implies that
\begin{equation}\label{prop:nece01condi1}
   \frac{g(\phi(x_n))-g(\phi(y_n))}{\vert \phi(x_n)-\phi(y_n)\vert}\rightarrow 1 
\end{equation}
and
\begin{equation}\label{prop:nece01condi2}
   \frac{\vert \phi(x_n)-\phi(y_n)\vert}{d(x_n,y_n)}\rightarrow 1. 
\end{equation}
Now the second condition of the thesis of the proposition is simply \eqref{prop:nece01condi2}, whereas the first one follows from \eqref{prop:nece01condi1} and the property exhibited of the function $g$.
\end{proof}

If we consider a slight strenghthening on the second condition in Proposition \ref{proponece[0,1]} we arrive at a sufficient condition for a composition operator to be isometric.

\begin{proposition}\label{prop:sufi[0,1]}
Let $N$ be a metric space and let $\phi:N\longrightarrow [0,1]$ be a 1-Lipschitz function so that $\phi(N)$ has length $1$. Assume that for every $t\in\phi(N)$ there exists $x\in N$ such that $\phi(x)=t$ and that
$$\justLip \phi(x)=1.$$
Then $C_\phi$ is an isometry.
\end{proposition}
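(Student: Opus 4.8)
The plan is to establish the nontrivial inequality $\Vert C_\phi(f)\Vert\geq\Vert f\Vert$ for every $f\in\Lip([0,1])$, the reverse one being immediate from $\Vert\phi\Vert=1$; by homogeneity it suffices to treat $\Vert f\Vert=1$. First I would record the elementary fact that a map $[0,1]\to\R$ is $1$-Lipschitz precisely when it is absolutely continuous with $|f'|\leq 1$ almost everywhere, so that $\Vert f\Vert=\operatorname{ess\,sup}_{[0,1]}|f'|=1$. Consequently, for each $\varepsilon>0$ the set
\[
D_\varepsilon:=\bigl\{\,s\in(0,1):\ f\text{ is differentiable at }s\text{ and }|f'(s)|>1-\varepsilon\,\bigr\}
\]
has strictly positive Lebesgue measure (using that $f$ is differentiable a.e.). This is the only place where the hypothesis that $\phi(N)$ has length $1$ enters, and it enters only through the following consequence of it: $\phi(N)$ meets every Lebesgue-measurable subset of $[0,1]$ of positive measure (a subset of $[0,1]$ of full length has complement of inner measure zero). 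Hence one may fix $s\in D_\varepsilon\cap\phi(N)$.

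Next, by assumption there is $x\in N$ with $\phi(x)=s$ and $\justLip\phi(x)=1$, that is, $\limsup_{y\to x,\ y\neq x}\frac{\phi(y)-\phi(x)}{d(y,x)}=1$. I would pick $y_n\to x$ with $y_n\neq x$ attaining this upper limit. Then $d(y_n,x)\to 0$; from $|\phi(y_n)-\phi(x)|\leq d(y_n,x)$ one gets $\phi(y_n)\to s$; and for large $n$ the quotient $\frac{\phi(y_n)-s}{d(y_n,x)}$ is close to $1$, in particular positive, so $\phi(y_n)>s$. Setting $h_n:=\phi(y_n)-s>0$ one has $h_n\downarrow 0$ and $\frac{h_n}{d(y_n,x)}\to 1$, and the difference quotient of $f\circ\phi$ along $(y_n,x)$ factorises as
\[
\frac{(f\circ\phi)(y_n)-(f\circ\phi)(x)}{d(y_n,x)}=\frac{f(s+h_n)-f(s)}{h_n}\cdot\frac{h_n}{d(y_n,x)}.
\]
The first factor converges to $f'(s)$ (differentiability of $f$ at $s$, with $h_n\downarrow 0$ and $s+h_n=\phi(y_n)\in[0,1]$) and the second to $1$; hence
\[
\Vert C_\phi(f)\Vert\geq\limsup_n\Bigl|\frac{(f\circ\phi)(y_n)-(f\circ\phi)(x)}{d(y_n,x)}\Bigr|=|f'(s)|>1-\varepsilon .
\]
Letting $\varepsilon\downarrow 0$ yields $\Vert C_\phi(f)\Vert\geq 1=\Vert f\Vert$, which together with the trivial bound shows $C_\phi$ is an isometry.

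The crux — and the step I expect to be the only real difficulty — is marrying two hypotheses of very different flavour: the ``macroscopic'' one, that $\phi(N)$ has length $1$, is used solely to pin down a single point $s\in\phi(N)$ at which $f$ is (almost) maximally steep, i.e.\ differentiable with $|f'(s)|$ as close to $1$ as desired; the ``infinitesimal'' one, $\justLip\phi(x)=1$ at a preimage $x$ of $s$, then supplies test pairs $(y_n,x)$ along which $\phi$ is asymptotically distance-preserving at $x$, and the two effects multiply exactly as in the displayed factorisation. Two minor points deserve care. Since $\justLip$ is defined without an absolute value, the $y_n$ approach $x$ only from the side on which $\phi$ increases; this is harmless, because the Lipschitz seminorm on $N$ bounds $\bigl|(f\circ\phi)(y_n)-(f\circ\phi)(x)\bigr|$, so the sign of $f'(s)$ never intervenes (alternatively, one could invoke the two-sided differentiability of $f$ at $s$). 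And $\justLip\phi(x)=1$ with $\phi(x)=s$ forces $s<1$ automatically, since $\phi(y_n)\leq 1$ cannot exceed $s$ when $s=1$; this is why restricting to $s\in(0,1)$ above costs nothing.
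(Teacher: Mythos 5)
Your proof is correct and follows essentially the same route as the paper's: locate a point $s\in\phi(N)$ where $f'$ is close to $1$ (using that $\phi(N)$ has full measure in $[0,1]$), then use the hypothesis $\justLip\phi(x)=1$ at a preimage of $s$ to produce test pairs along which the difference quotient of $f\circ\phi$ factorises into a term tending to $f'(s)$ and a term tending to $1$. The only (cosmetic) difference is that you extract $s$ directly from $\operatorname{ess\,sup}|f'|=1$, whereas the paper first picks a near-norming pair $(x,y)$ and averages $f'$ over the segment; your version is in fact slightly more careful about differentiability, measurability and sign issues that the paper leaves implicit.
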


\begin{proof}
Pick a norm-one Lipschitz function $f\in \Lip(M)$. Let us see that $\Vert f\circ\phi\Vert=1$. To this end pick $\varepsilon>0$ and choose $x\neq y\in M$ such that 
$$\frac{f(x)-f(y)}{d(x,y)}=\frac{1}{d(x,y)}\int_y^x f'(t)\ dt >1-\varepsilon^2.$$
By this inequality and since $\lambda(\phi(N))=1$ we can find $t\in \phi(N)$ such that $f'(t)>1-\varepsilon$. By assumptions we can find $x\in N$ such that $\phi(x)=t$ and a sequence $\{x_n\}\subseteq N$ such that $x_n\neq x$ holds for all $n\in\mathbb N$, $\phi(x_n)\rightarrow \phi(x)$ and 
$$\frac{\phi(x_n)-\phi(x)}{d(x_n,x)}\rightarrow 1.$$ 
Since $\phi(x_n)\rightarrow x$ and they are different we get that 
$$\frac{f(\phi(x_n))-f(\phi(x))}{d(\phi(x_n),\phi(x))}\rightarrow f'(t)>1-\varepsilon.$$
Hence
$$\Vert C_\phi(f)\Vert\geq \frac{C_\phi(f)(x_n)-C_\phi(f)(x)}{d(x_n,x)}$$

$$\geq \frac{f(\phi(x_n))-f(\phi(x))}{\phi(x_n)-\phi(x)}\frac{\phi(x_n)-\phi(x)}{d(x_n,x)}\rightarrow f'(t)>1-\varepsilon.$$
Since $\varepsilon>0$ was arbitrary we conclude the desired result.
\end{proof}

We do not know whether the converse of Proposition \ref{proponece[0,1]} holds. Let us exhibit, however, a class of metric spaces $N$ and of Lipschitz functions $\phi$ where Proposition \ref{proponece[0,1]} reverses.

\begin{example}\label{exam:clarke}
Let $X$ be a finite-dimensional Banach space, let $C$ be a bounded, open and convex subset of $X$ such that every point of $\partial C$ has a unique supporting tangent hyperspace. Let $N:=\overline{C}$. Consider $f:X\longrightarrow \mathbb R$ be a norm-one Lipschitz function which regularises every point of $X$ (in particular, this happen if $f$ is convex) such that $f(N)=[0,1]$. Let $\phi:=f_{|N}:N\longrightarrow [0,1]$. Then $C_\phi$ is isometric if, and only if, for every $t\in [0,1]$ we can find a pair of sequences $x_n, y_n$ in $N$ such that
\begin{enumerate}
    \item $\phi(x_n)\rightarrow t$ and $\phi(y_n)\rightarrow t$.
    \item $\frac{d(\phi(x_n),\phi(y_n))}{d(x_n,y_n)}\rightarrow 1.$
    \end{enumerate}
\end{example}

\begin{proof}
The neccesity is just Proposition \ref{proponece[0,1]}. To prove the sufficiency, let us prove that $\phi$ satisfies the assumptions of Proposition \ref{prop:sufi[0,1]}. Pick any $t\in [0,1]$. Then, by assumptions, we can find a pair of sequences $x_n, y_n$ in $N$ such that
\begin{enumerate}
    \item $\phi(x_n)\rightarrow t$ and $\phi(y_n)\rightarrow t$.
    \item $\frac{d(\phi(x_n),\phi(y_n))}{d(x_n,y_n)}\rightarrow 1.$
    \end{enumerate}
By \cite[Theorem 2.3.7]{cla} we have that, for every $n\in\mathbb N$, we can find $u_n\in ]x_n,y_n[\subseteq N$ and $\varphi_n\in \partial f(u_n)$ such that
$$f(x_n)-f(y_n)=\varphi_n(x_n-y_n),$$
and so $\frac{f(x_n)-f(y_n)}{d(x_n,y_n)}=\varphi_n\left(\frac{x_n-y_n}{d(x_n,y_n)} \right)$ holds for every $n\in\mathbb N$. Since $N$ is compact, the sequence $\varphi_n$ is bounded in $X^*$ \cite[Proposition 2.1.2]{cla} and $\frac{x_n-y_n}{d(x_n,y_n)}$ is a sequence in the compact set $S_X$ we can assume, up taking a suitable subsequence, that $x_n\rightarrow x\in N$, $y_n\rightarrow y\in N$, $\varphi_n\rightarrow \varphi\in X^*$ and $\frac{x_n-y_n}{d(x_n,y_n)}\rightarrow v\in S_X$. Now, the assumptions on the sequences $x_n$ and $y_n$ imply that $x=y$ and that $\phi(x)=t$. Also, since $u_n\in ]x_n,y_n[$, we get that $u_n\rightarrow x$. Hence, \cite[Proposition 2.1.5]{cla} implies that $\varphi\in \partial f(x)$. Finally, notice that $\varphi(v)=1$. Since $\varphi\in \partial f(x)$, Proposition 2.1.2 in \cite{cla} implies that
$$1=\varphi(v)\leq f^\circ (x,v)\leq 1.$$
Now let us prove the following claim.

\begin{claim}
For every $\varepsilon>0$ there exists $v_\varepsilon\in S_X$ such that $f^\circ (x,v_\varepsilon)>1-\varepsilon$ and such that there exists a sequence of positive numbers $t_n\rightarrow 0$ such that $x+t_n v_\varepsilon\in N$ holds for every $n\in\mathbb N$.
 \end{claim}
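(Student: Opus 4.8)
The plan is to split into the two cases $x\in C$ and $x\in\partial C$, the first being trivial and the second carrying all the weight. If $x\in C$ then $x$ is interior to $N$, so $x+tv\in N$ for all sufficiently small $t>0$; since we already know $f^\circ(x,v)=1>1-\varepsilon$, the vector $v_\varepsilon:=v$ together with any sequence $t_n\searrow 0$ of small enough positive numbers does the job. So the content is the case $x\in\partial C$, which I would set up as follows.

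By hypothesis there is a unique $p\in S_{X^*}$ supporting $C$ at $x$, i.e.\ $p(z)\leq p(x)$ for all $z\in N$. I would first record the elementary convexity facts that $\bigcup_{s>0}s(N-x)$ is a convex set, that its closure is the half-space $H:=\{w\in X:p(w)\leq 0\}$ (here uniqueness of the supporting hyperplane is used: the normal cone of $N$ at $x$ is the ray $\mathbb{R}_{\geq0}\,p$, so the tangent cone is its polar $H$), and that — having nonempty interior — this convex cone already contains the open half-space $\{w:p(w)<0\}$. Consequently, for every $w$ with $p(w)<0$ one has $x+tw\in N$ for all small $t>0$. Thus the claim reduces to producing, for a given $\varepsilon>0$, a vector $v_\varepsilon\in S_X$ with $p(v_\varepsilon)<0$ and $f^\circ(x,v_\varepsilon)>1-\varepsilon$; the sequence $t_n\searrow 0$ is then automatic.

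To produce $v_\varepsilon$ I would fix once and for all a direction $w_0\in S_X$ pointing strictly into $C$, i.e.\ with $p(w_0)<0$ (any positive multiple of $c_0-x$ with $c_0\in C$ works), and set for small $\theta>0$
\[ v_\theta:=\frac{v+\theta w_0}{\norm{v+\theta w_0}}. \]
Since $f^\circ(x,\cdot)$ is sublinear and $1$-Lipschitz (because $f$ is $1$-Lipschitz), $f^\circ(x,v)\leq f^\circ(x,v+\theta w_0)+\theta$, so $f^\circ(x,v+\theta w_0)\geq 1-\theta$; as $\norm{v+\theta w_0}\leq 1+\theta$, this gives $f^\circ(x,v_\theta)\geq \frac{1-\theta}{1+\theta}$, which exceeds $1-\varepsilon$ once $\theta$ is small. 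Moreover $p(v_\theta)$ has the same sign as $p(v)+\theta p(w_0)$, and $\theta p(w_0)<0$, so \emph{provided $p(v)\leq 0$} we get $p(v_\theta)<0$ and may take $v_\varepsilon:=v_\theta$ for a suitably small $\theta$.

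The main obstacle is therefore exactly the inequality $p(v)\leq 0$, i.e.\ that the limiting chord direction $v$ is tangent to, or points into, $N$ at $x$; this is where the convexity of $N$, the uniqueness of the supporting hyperplane, and the precise way $v$ was obtained must all be combined. The natural attempt is to recall that $v=\lim_n (x_n-y_n)/d(x_n,y_n)$ with $x_n,y_n\in N$ both tending to $x$, that the whole chord $[y_n,x_n]$ lies in $N$ and passes through the mean-value point $u_n\to x$, and that $f(x_n)-f(y_n)\to d(x_n,y_n)$ with $f$ being $1$-Lipschitz and regular at $x$. After passing to further subsequences one extracts unit limits $a=\lim(x_n-x)/d(x_n,x)$ and $b=\lim(y_n-x)/d(y_n,x)$, which automatically satisfy $p(a)\leq0$ and $p(b)\leq0$ (as $x_n,y_n\in N$), and one writes $v$ as a limit of nonnegative combinations of $a$ and $-b$; the delicate point is to show that the outward contribution coming from $-b$ cannot survive, for which I would expect to need that $f'(x,\cdot)=f^\circ(x,\cdot)$ together with the rate-one condition force $a$ and $b$ to be aligned. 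This last step is the part I would work out in full detail first, since it is where the argument is most fragile.
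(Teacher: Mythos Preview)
Your overall strategy coincides with the paper's: perturb $v$ toward the interior of $N$ and invoke the Lipschitz continuity of $f^\circ(x,\cdot)$. The paper phrases the dichotomy as ``either some $t_n\searrow 0$ gives $x+t_nv\in N$, or else the line $x+\mathbb{R}v$ lies in the supporting hyperplane $H$,'' and in the second case tilts $v$ toward a fixed $z\in C$. You split instead on $x\in C$ versus $x\in\partial C$ and correctly isolate the crux: the perturbation lands in $N$ only when $p(v)\le 0$, and you leave this inequality unproved.

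That inequality cannot be rescued from the available data, because the Claim itself fails in the generality of Example~\ref{exam:clarke}. Take $X=\mathbb{R}^2$ with the euclidean norm, $C$ the open disc of radius $\tfrac12$ centred at $(\tfrac12,0)$, and $f(a,b)=a$; then $f$ is regular everywhere and $f(N)=[0,1]$. For $t=1$ the sequences $x_n=(1-\tfrac1n,0)$, $y_n=(1-\tfrac2n,0)$ satisfy all the running hypotheses and produce $x=(1,0)$, $v=(1,0)$, while the supporting functional is $p(a,b)=a$, so $p(v)=1>0$. Since $f^\circ(x,w)=w_1$, any $v_\varepsilon\in S_X$ with $f^\circ(x,v_\varepsilon)>1-\varepsilon$ has first coordinate exceeding $1-\varepsilon>0$; but the squared distance from $x+t\,v_\varepsilon$ to the centre $(\tfrac12,0)$ equals $\tfrac14+t(v_\varepsilon)_1+t^2>\tfrac14$, so $x+t\,v_\varepsilon\notin N$ for every $t>0$. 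Hence no $v_\varepsilon$ as in the Claim exists when $\varepsilon<1$. The same example defeats the paper's argument: the ``otherwise'' case is reached, yet the line $x+\mathbb{R}v$ is the horizontal axis, not contained in $H$; and for $z=(\tfrac12,0)\in C$ the points $x+t(v+\varepsilon z)$ move further out of $N$ for $t>0$, so the sentence ``so we can find a sequence of positive numbers $t_n\to 0$ such that $x+t_n(v+\varepsilon z)\in N$'' is unjustified. Your instinct that this step is ``fragile'' is therefore exactly right: it is not a gap to be filled but a genuine obstruction, and the paper's proof shares it rather than resolving it.
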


\begin{proof}[Proof of the Claim.] If there exists a sequence of positive numbers $t_n\rightarrow 0$ such that $x+t_n v\in N$ holds for every $n\in\mathbb N$ then we are done. Otherwise $\{x+tv: t\in\mathbb R\}$ is contained in the (unique) supporting hyperplane, say $H$ (in particular, $x\in \partial C$). Pick $z\in C$ and notice that, for every $\varepsilon>0$ the set $x+\mathbb R (v+\varepsilon z)$ is not in $H$, so we can find a sequence of positive numbers $t_n\rightarrow 0$ such that $x+t_n(v+\varepsilon z)\in N$. Since the function $f^\circ (x,\cdot)$ is Lipschitz \cite[Proposition 2.1.2]{cla}, we get
$$f^\circ (x,v+\varepsilon z)=f^\circ (x,v)+f^\circ (x,v+\varepsilon z)-f^\circ (f,v)\geq 1-\varepsilon \Vert z\Vert.$$
Since $z\in C$ and $C$ is bounded, the Claim is proved. \end{proof}
Since $f$ regularises $x$ in the language of \cite{cla}, which means that the classical directional derivative $f'(x,v_\varepsilon)$ exists and agrees with $f^\circ (x,v)$. Consequently, 
$$1-\varepsilon<\lim\limits_{n}\frac{f(x+t_n v_\varepsilon)-f(x)}{d(x+t_n v_\varepsilon, x)}=\lim\limits_{n}\frac{\phi(x+t_n v_\varepsilon)-\phi(x)}{d(x+t_n v_\varepsilon, x)}$$
since $x+t_n v_\varepsilon\in N$ holds for every $n\in\mathbb N$. Now, Proposition \ref{prop:sufi[0,1]} applies to obtain that $C_\phi$ is an isometry. \end{proof}

Let us now exhibit a class of examples where Proposition \ref{prop:sufi[0,1]} applies. To this end, let us consider the following lemma which is well known by the speciallist. However, let us include a proof for the sake of completeness.

\begin{lemma}\label{lema:generaMcshane}
Let $M$ be a metric space and $N$ be a subset of $M$ containing the origin. Let $f:N\longrightarrow \mathbb R$ be a Lipschitz function and $g:M\longrightarrow \mathbb R$ Lipschitz too so that $\Vert g\Vert\leq \Vert f\Vert$ and such that $g\leq f$ on $N$.

Then there exists a Lipschitz extension $F:M \longrightarrow \mathbb R$ of $f$ such that $\Vert F\Vert=\Vert f\Vert$ and such that $g\leq F$ on $M$.
\end{lemma}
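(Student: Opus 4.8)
The plan is to invoke the classical McShane extension formula and to observe that, because $g$ lies below $f$ on $N$ and has no larger Lipschitz constant, the \emph{largest} norm-preserving extension of $f$ automatically dominates $g$ on all of $M$. Concretely, write $L:=\Vert f\Vert$ for the Lipschitz constant of $f$ on $N$ and set
$$F(x):=\inf_{y\in N}\bigl(f(y)+L\,d(x,y)\bigr),\qquad x\in M.$$
The first step is to check that $F$ is well defined and real-valued: $N$ is nonempty (it contains $0$), and for a fixed $y_0\in N$ the $L$-Lipschitz bound on $f$ together with the triangle inequality gives $f(y)+L\,d(x,y)\geq f(y_0)-L\,d(x,y_0)$ for every $y\in N$, so the infimum is finite.

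Next I would verify the three required properties. (i) $F$ extends $f$: for $x\in N$, taking $y=x$ shows $F(x)\leq f(x)$, while $f(y)+L\,d(x,y)\geq f(x)$ for every $y\in N$ (since $f(x)-f(y)\leq L\,d(x,y)$) gives $F(x)\geq f(x)$. (ii) $F$ is $L$-Lipschitz on $M$: for $x_1,x_2\in M$ and $y\in N$ one has $f(y)+L\,d(x_1,y)\leq f(y)+L\,d(x_2,y)+L\,d(x_1,x_2)$, and taking the infimum over $y$ yields $F(x_1)\leq F(x_2)+L\,d(x_1,x_2)$; by symmetry $\Vert F\Vert\leq L$, and since $F$ extends $f$ also $\Vert F\Vert\geq\Vert f\Vert=L$, hence $\Vert F\Vert=\Vert f\Vert$. (iii) $g\leq F$ on $M$: for $x\in M$ and $y\in N$, the $\Vert g\Vert$-Lipschitz bound for $g$ together with $\Vert g\Vert\leq L$ and $g(y)\leq f(y)$ gives
$$g(x)\leq g(y)+\Vert g\Vert\,d(x,y)\leq f(y)+L\,d(x,y),$$
and taking the infimum over $y\in N$ yields $g(x)\leq F(x)$.

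There is no genuine obstacle here — as noted, the statement is folklore — so the only point that needs a little care is that the inequality $g\leq F$ must be obtained from the direct estimate in (iii) rather than from the (false) assertion that $g$ is itself an extension of $f$: the relevant fact is that $g$, being $L$-Lipschitz on $M$ and lying below $f$ on $N$, lies below \emph{every} competitor $y\mapsto f(y)+L\,d(\cdot,y)$. (Alternatively, the same $F$ can be produced as $\max\{g,\widetilde{F}\}$, where $\widetilde{F}(x)=\sup_{y\in N}\bigl(f(y)-L\,d(x,y)\bigr)$ is the smallest norm-preserving extension of $f$, using that the pointwise maximum of two functions with Lipschitz constant at most $L$ again has Lipschitz constant at most $L$ and that $\max\{g,\widetilde{F}\}=f$ on $N$; but the infimum formula makes the domination $g\leq F$ most transparent.)
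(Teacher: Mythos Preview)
Your proof is correct and uses essentially the same McShane formula $F(x)=\inf_{y\in N}\bigl(f(y)+\Vert f\Vert\,d(x,y)\bigr)$ and the same chain of inequalities for $g\leq F$ as the paper. The only difference is cosmetic: the paper applies the formula at a single point $x_0\in M\setminus N$ and concludes by a one-point-at-a-time extension argument, whereas you define $F$ globally on $M$ in one step and verify directly that it is a norm-preserving extension dominating $g$; your version is in fact slightly more self-contained.
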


\begin{proof}
Given $x_0\in M\setminus N$, define
$$F(x_0):=\inf\limits_{x\in N} f(x)+\Vert f\Vert d(x,x_0).$$
It is known that the previous formula defines a norm preserving extension of $f$ to $M\cup\{x_0\}$ \cite[Theorem 1.33]{wea}. Let us prove that $g(x_0)\leq F(x_0)$. 
\[
\begin{split}
F(x_0)=\inf\limits_{x\in N} f(x)+\Vert f\Vert d(x_0,x)& \geq \inf\limits_{x\in N} g(x)+\Vert f\Vert d(x_0,x)\\
& \geq \inf\limits_{x\in N} g(x)+\Vert g\Vert d(x_0,x)\\
& \geq g(x_0).
\end{split}
\]
This shows that $f$ can always be extented to a new point, which finishes the proof.
\end{proof}

Let us now give examples of mappings $\phi$ satisfying the assumptions of Proposition 2.2, whose idea is encoded in the proof of \cite[Theorem 2.3]{kms}.

\begin{example}\label{exam:copias[0,1]}
Let $M$ be a metric space containing an isometric copy of $[0,1]$ and let $i:[0,1]\longrightarrow i([0,1])\subseteq M$ be an isometry. Consider $i^{-1}:i(M)\longrightarrow [0,1]\hookrightarrow \mathbb R$. If we apply Lemma \ref{lema:generaMcshane} to $f=i^{-1}$ and $g=0$ we get an extension $\phi: M\longrightarrow \mathbb R$ such that $\Vert \phi\Vert=1$ and $\phi(x)\geq 0$. Actually, since $\phi$ is a norm-one Lipschitz function we assume that $\phi(x)\in [0,1]$ (up to compose, for instance, with the function $\psi:\mathbb R_0^+\longrightarrow [0,1]$ by $$\phi(x):=\left\{\begin{array}{cc}
   x  &  x\leq 1,\\
   1 &  x>1,
\end{array} \right.$$
which is a norm-one Lipschitz function), so we will actually consider $\phi:M\longrightarrow [0,1]$. From the clear fact that $\phi\circ i$ is the identity map on $[0,1]$, so $\phi$ clearly satisfies the assumptions of Proposition 2.2.
\end{example}

Given a geodesic metric space $M$, a pair of points $x,y\in M$ and an $\alpha:[0,d(x,y)]\longrightarrow M$ such that $\alpha(0)=x$ and $\alpha(d(x,y))=y$, we will say that a norm-one Lipschitz function $P:M\longrightarrow [0,d(x,y)]$ is \textit{an inverse projection of $\alpha$} if $\Vert P\Vert=1$ and $P\circ \alpha$ is the indentity mapping on $[0,d(x,y)]$. Notice that the techniques involving Example \ref{exam:copias[0,1]} imply that given any $\alpha$ in the above conditions then an inverse projection of $\alpha$ always exists.

With this definition in mind, we can now give a neccesary condition for a composition operator to be isometric in the context of geodesic metric spaces.

\begin{theorem}\label{theo:maincondinece}
Let $M$ be a geodesic metric space, $N$ be a metric space and $\phi:N\longrightarrow M$ be a norm-one Lipschitz map. Assume that $C_\phi:\Lip(M)\longrightarrow \Lip(N)$ is isometric. Then, for every $x\neq y\in M$, every isometry $\alpha:[0,d(x,y)]\longrightarrow M$ connecting $x$ and $y$ and every inverse projection of $\alpha$ $P:M\longrightarrow [0,d(x,y)]$, the following holds: for every $t\in [0,d(x,y)]$, we can find a pair of sequences $x_n,y_n\in N$ such that $x_n\neq y_n$ for every $n\in\mathbb N$ and such that $P(\phi(x_n))\rightarrow t, P(\phi(y_n))\rightarrow t$ and 
$$\frac{P(\phi(x_n))-P(\phi(y_n))}{d(x_n,y_n)}\rightarrow 1.$$
\end{theorem}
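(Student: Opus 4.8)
The plan is to reduce Theorem~\ref{theo:maincondinece} to the one-dimensional statement of Proposition~\ref{proponece[0,1]} by post-composing $\phi$ with the inverse projection $P$. Fix $x\neq y\in M$, an isometry $\alpha\colon[0,d(x,y)]\longrightarrow M$ connecting $x$ and $y$, and an inverse projection $P\colon M\longrightarrow[0,d(x,y)]$ of $\alpha$. After rescaling $[0,d(x,y)]$ to $[0,1]$ (dividing $P$ by $d(x,y)$, which does not change the Lipschitz norm being equal to one), set $\psi:=\tfrac{1}{d(x,y)}P\circ\phi\colon N\longrightarrow[0,1]$. Since $\|P\|=1$ and $\|\phi\|=1$ we have $\|\psi\|\leq 1$; the point where $P\circ\alpha$ is the identity forces $\|P\|$ to be attained along $\alpha$, and this will be used to see $\psi$ is genuinely norm-one (alternatively one argues directly that $C_\psi$ is isometric, which already gives $\|\psi\|=1$).

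The key step is to show that $C_\psi\colon\Lip([0,1])\longrightarrow\Lip(N)$ is an isometry, given that $C_\phi\colon\Lip(M)\longrightarrow\Lip(N)$ is. This follows from the factorisation $C_\psi=C_\phi\circ C_{P/d(x,y)}$: indeed, for $h\in\Lip([0,1])$ one has $C_\psi(h)=h\circ\psi=(h\circ\tfrac{1}{d(x,y)}P)\circ\phi=C_\phi\bigl(C_{P/d(x,y)}(h)\bigr)$. Since $C_\phi$ is an isometry it suffices to know $\|C_{P/d(x,y)}(h)\|=\|h\|$ for every $h$, i.e.\ that $C_{P/d(x,y)}\colon\Lip([0,1])\longrightarrow\Lip(M)$ is isometric; and this is immediate from the defining property of an inverse projection, because $h\circ\tfrac{1}{d(x,y)}P\circ\alpha=h\circ(\text{identity rescaled})$ recovers $h$ up to the rescaling, so the Lipschitz seminorm of $h\circ\tfrac{1}{d(x,y)}P$ computed along the isometric image $\alpha([0,d(x,y)])$ already equals $\|h\|$, while it cannot exceed $\|h\|$ since $\|P/d(x,y)\|=1$. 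Thus $C_{P/d(x,y)}$ is an isometry, hence so is $C_\psi$.

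Once $C_\psi$ is known to be isometric, Proposition~\ref{proponece[0,1]} applies verbatim to $\psi\colon N\longrightarrow[0,1]$: for every $s\in[0,1]$ there are sequences $x_n,y_n\in N$ with $\psi(x_n)\to s$, $\psi(y_n)\to s$ and $\tfrac{|\psi(x_n)-\psi(y_n)|}{d(x_n,y_n)}\to 1$. Writing $s=t/d(x,y)$ and unravelling $\psi=\tfrac{1}{d(x,y)}P\circ\phi$, the factor $d(x,y)$ cancels and we obtain exactly $P(\phi(x_n))\to t$, $P(\phi(y_n))\to t$ and $\tfrac{P(\phi(x_n))-P(\phi(y_n))}{d(x_n,y_n)}\to 1$ (the absolute value may be dropped by passing to a subsequence and, if necessary, swapping the roles of $x_n$ and $y_n$). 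Finally $x_n\neq y_n$ is automatic since the ratio tends to $1\neq 0$, so $d(x_n,y_n)>0$ for large $n$; discard the finitely many bad indices. This yields the conclusion for the given $x,y,\alpha,P$, and since these were arbitrary the theorem follows.

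The main obstacle I anticipate is the bookkeeping around the rescaling from $[0,d(x,y)]$ to $[0,1]$ and making sure the factorisation $C_\psi=C_\phi\circ C_{P/d(x,y)}$ is set up with the correct normalisations so that Proposition~\ref{proponece[0,1]} (stated for targets $[0,1]$) can be invoked cleanly; the substantive mathematical content — that post-composing with an inverse projection preserves the Lipschitz norm — is a direct consequence of the definition and should present no real difficulty.
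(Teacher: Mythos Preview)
Your proposal is correct and follows essentially the same route as the paper: reduce to Proposition~\ref{proponece[0,1]} by observing that $C_{P\circ\phi}=C_\phi\circ C_P$ is an isometry because $C_P$ is. The only cosmetic difference is that the paper normalises by assuming $d(x,y)=1$ and invokes Proposition~\ref{prop:sufi[0,1]} to conclude that $C_P$ is isometric, whereas you argue this directly from the defining property $P\circ\alpha=\mathrm{id}$ (which is in fact the cleaner way, since restricting $h\circ P$ to the isometric image $\alpha([0,d(x,y)])$ already recovers $\Vert h\Vert$).
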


\begin{proof}
Given $x,y\in M, x\neq y$ we can assume, with no loss of generality, that $d(x,y)=1$. Consider an isometry $\alpha: [0,1]\longrightarrow M$ connecting $x$ and $y$, consider an inverse projection of $\alpha$, say $P:M\longrightarrow [0,1]$. Then $C_P: \Lip([0,1])\longrightarrow \Lip(M)$ is an isometric composition operator because of Proposition \ref{prop:sufi[0,1]}. Thus, by assumptions, $C_\phi\circ C_P=C_{P\circ \phi}: \Lip([0,1])\longrightarrow \Lip(N)$ is an isometry. By Proposition \ref{proponece[0,1]} we obtain the desired pair of sequences.
\end{proof}

Inspired by Proposition \ref{prop:sufi[0,1]}, we can give a sufficient condition involving pointwise Lipschitz constants in the following sense.

\begin{theorem}\label{theo:maincondisufi}
Let $M$ be a geodesic metric space, $N$ be a connected metric space and let $\phi:N\longrightarrow M$ be a norm-one Lipschitz function with dense range. Assume that, for every pair of distinct points $x,y\in M$ , every isometry $\alpha:[0,d(x,y)]\longrightarrow M$ connecting the points $x$ and $y$, every inverse projection to $\alpha$ $P:M\longrightarrow [0,d(x,y)]$ and every $z\in P(\phi(N))$ there exists $x\in (P\circ \phi)^{-1}(z)$ such that
$$\justLip (P\circ \phi)(x)=1.$$
Then $C_\phi$ is an isometry.
\end{theorem}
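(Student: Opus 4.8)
The plan is to reduce Theorem~\ref{theo:maincondisufi} to Proposition~\ref{prop:sufi[0,1]} by the same factorisation trick used in Theorem~\ref{theo:maincondinece}. Fix a norm-one Lipschitz function $f\in\Lip(M)$; we must show $\Vert C_\phi(f)\Vert=1$. Since one inequality is always free, pick $\varepsilon>0$ and, using that $M$ is geodesic, choose distinct $x,y\in M$ realising $\frac{f(x)-f(y)}{d(x,y)}>1-\varepsilon$ along a geodesic, i.e.\ there is an isometry $\alpha:[0,d(x,y)]\longrightarrow M$ connecting $x$ and $y$ with $(f\circ\alpha)(d(x,y))-(f\circ\alpha)(0)>(1-\varepsilon)\,d(x,y)$. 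Let $P:M\longrightarrow[0,d(x,y)]$ be an inverse projection of $\alpha$, which exists by the remark following Example~\ref{exam:copias[0,1]}.

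Next I would relate $f$ to a function on $[0,d(x,y)]$. The natural candidate is $h:=f\circ\alpha\in\Lip([0,d(x,y)])$, which has $\Vert h\Vert\le 1$ and, by the choice of $\alpha$, satisfies $\frac{h(d(x,y))-h(0)}{d(x,y)}>1-\varepsilon$. The key point is that $C_P(h)=h\circ P$ agrees with $f$ along $\alpha$ (since $P\circ\alpha=\mathrm{id}$) but need not equal $f$ on all of $M$; nonetheless what we actually need is a lower bound for $\Vert C_\phi(f)\Vert$, and here one should argue directly rather than through $C_P$. Concretely: the hypothesis gives, for the composed $1$-Lipschitz function $g:=P\circ\phi:N\longrightarrow[0,d(x,y)]$, that $g$ has dense range in $P(\phi(N))$ and that for every $z\in P(\phi(N))$ there is $x_z\in g^{-1}(z)$ with $\justLip g(x_z)=1$; moreover $g(N)$ is connected (continuous image of the connected set $N$) and dense, hence $g(N)$ is an interval with length $d(x,y)$, so the length hypothesis of Proposition~\ref{prop:sufi[0,1]} is met after rescaling. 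Applying Proposition~\ref{prop:sufi[0,1]} to $g$ (rescaled to $[0,1]$) shows $C_g:\Lip([0,d(x,y)])\longrightarrow\Lip(N)$ is an isometry, whence $\Vert C_g(h)\Vert=\Vert h\Vert\ge\frac{h(d(x,y))-h(0)}{d(x,y)}>1-\varepsilon$. Finally $C_g(h)=h\circ P\circ\phi=(f\circ\alpha)\circ P\circ\phi$; on the other hand $\Vert C_\phi(f)\Vert\ge\Vert (f\circ\alpha\circ P)\circ\phi\Vert=\Vert C_g(h)\Vert$ provided $f\circ\alpha\circ P$ is dominated in Lipschitz norm by $f$ in the relevant directions — this is where some care is needed.

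The cleanest way to close the gap of the previous paragraph is to invoke Lemma~\ref{lema:generaMcshane}: the inverse projection $P$ can be chosen so that $f\circ\alpha\circ P\le f$ (or $\ge f$) on $M$, but more simply, since $P\circ\alpha=\mathrm{id}$ and $\alpha$ is a geodesic, one shows that $k:=(f\circ\alpha)\circ P$ is a norm-one Lipschitz function on $M$ with $k=f$ on $\alpha([0,d(x,y)])$, and then runs the argument with $k$ in place of $f$ from the start: $C_\phi(k)=C_g(h)$ exactly, and $\Vert C_\phi(k)\Vert\le\Vert C_\phi\Vert\,\Vert k\Vert$ reversed gives nothing, so instead we simply note $\Vert C_\phi\Vert=1$ forces $\Vert C_\phi(k)\Vert\le 1$ while the chain above gives $\Vert C_\phi(k)\Vert>1-\varepsilon$; but we wanted this for $f$, not $k$. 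The honest route, therefore, is: do not reduce to $[0,1]$ via $k$, but mimic the proof of Proposition~\ref{prop:sufi[0,1]} directly. Write $f\circ\alpha$ as an integral of its derivative on $[0,d(x,y)]$, locate by the length-one condition a point $z\in P(\phi(N))$ at which $(f\circ\alpha)'(z)>1-\varepsilon$, take the guaranteed $x\in(P\circ\phi)^{-1}(z)$ with $\justLip(P\circ\phi)(x)=1$, pick $x_n\to x$ with $\frac{(P\circ\phi)(x_n)-(P\circ\phi)(x)}{d(x_n,x)}\to 1$, and estimate
\[
\Vert C_\phi(f)\Vert\ge\frac{f(\phi(x_n))-f(\phi(x))}{d(x_n,x)}=\frac{(f\circ\alpha)(g(x_n))-(f\circ\alpha)(g(x))}{g(x_n)-g(x)}\cdot\frac{g(x_n)-g(x)}{d(x_n,x)}\longrightarrow (f\circ\alpha)'(z)>1-\varepsilon,
\]
using $f(\alpha(P(\phi(x_n))))=f(\phi(x_n))$? — no: $P(\phi(x_n))$ need not land on $\alpha$'s image hitting $\phi(x_n)$. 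This is precisely the main obstacle: $f\circ\phi$ and $(f\circ\alpha)\circ P\circ\phi$ differ, so one cannot literally import the Proposition~\ref{prop:sufi[0,1]} computation.

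The resolution — and the structure I expect the author uses — is the factorisation $C_{P\circ\phi}=C_\phi\circ C_P$. By Proposition~\ref{prop:sufi[0,1]} (applied to $P:M\longrightarrow[0,d(x,y)]$, whose relevant range-length and pointwise-derivative conditions hold because $P\circ\alpha=\mathrm{id}$), the operator $C_P:\Lip([0,d(x,y)])\longrightarrow\Lip(M)$ is an isometry, so in particular it does not decrease norms; thus $\Vert C_\phi(f)\Vert$ for general $f$ is bounded below by $\Vert C_\phi(C_P(h))\Vert=\Vert C_{P\circ\phi}(h)\Vert$ whenever $f$ dominates $C_P(h)$, and every $f\in S_{\Lip(M)}$ with near-maximal slope on $\alpha$ dominates (up to $\varepsilon$) such a $C_P(h)$ with $\Vert h\Vert=1$ by the extension Lemma~\ref{lema:generaMcshane}. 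It then suffices to show $C_{P\circ\phi}:\Lip([0,d(x,y)])\longrightarrow\Lip(N)$ is an isometry, and this is exactly Proposition~\ref{prop:sufi[0,1]} applied to $g=P\circ\phi$: $g$ has dense, connected, hence interval range of length $d(x,y)$, and the hypothesis of the theorem supplies, for each $z\in g(N)$, a preimage $x$ with $\justLip g(x)=1$. Therefore $\Vert C_\phi(f)\Vert\ge\Vert C_{P\circ\phi}(h)\Vert=\Vert h\Vert>1-\varepsilon$, and letting $\varepsilon\to 0$ finishes the proof. The main obstacle to watch is the first reduction (dominating an arbitrary near-extremal $f$ on $M$ by something of the form $h\circ P$ with $h$ of norm one), which is handled by Lemma~\ref{lema:generaMcshane}; everything else is the factorisation plus Propositions~\ref{proponece[0,1]}--\ref{prop:sufi[0,1]}.
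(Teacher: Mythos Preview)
Your proposal never settles into a complete argument: you cycle through three approaches, correctly flag an obstacle in the ``direct mimic'' route, and then offer a factorisation fix that reintroduces the very same unjustified step. Concretely, in your last paragraph you assert
\[
\Vert C_\phi(f)\Vert \;\ge\; \Vert C_{P\circ\phi}(h)\Vert \;=\; \Vert (f\circ\alpha\circ P)\circ\phi\Vert,
\]
but this is exactly the inequality you earlier (rightly) objected to: knowing that $C_{P\circ\phi}$ is an isometry tells you $\Vert C_\phi(g)\Vert=\Vert g\Vert$ for functions of the special form $g=h\circ P$, not for your given $f$. Your appeal to ``domination'' and Lemma~\ref{lema:generaMcshane} does not help---pointwise inequalities between $f$ and $h\circ P$ on $M$ do not transfer to inequalities between $\Vert f\circ\phi\Vert$ and $\Vert (h\circ P)\circ\phi\Vert$. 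So the proposal has a genuine gap at precisely the place you identified.

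For comparison, the paper's proof is exactly your ``direct mimic'' route from the third paragraph: it picks $x,y$ with $f(m_{x,y})>1-\varepsilon^2$, writes $f\circ\alpha$ as an integral to locate $z\in P(\phi(N))$ with $(f\circ\alpha)'(z)>1-\varepsilon$, invokes the hypothesis to get $x\in N$ and $x_n\to x$ with $\frac{P(\phi(x_n))-P(\phi(x))}{d(x_n,x)}\to 1$, and then concludes with the chain
\[
\Vert C_\phi(f)\Vert \;\ge\; \limsup_n \frac{(f\circ\alpha)(P(\phi(x_n)))-(f\circ\alpha)(P(\phi(x)))}{d(x_n,x)}\;\ge\;1-\varepsilon.
\]
The first inequality here is asserted without comment, and it is the same step you singled out: the right-hand side is an incremental quotient of $(f\circ\alpha\circ P)\circ\phi$, not of $f\circ\phi$, and in general $\phi(x)\neq\alpha(P(\phi(x)))$. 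In other words, the paper takes the very line you rejected; your instinct that something is missing there is correct, and your alternative routes do not repair it. A clean resolution would require either forcing $\phi(x_n),\phi(x)$ to lie on (or be well-approximated by) the geodesic $\alpha$, or replacing the pointwise-Lipschitz hypothesis by one that controls $f$ itself near $\phi(x)$ rather than $f\circ\alpha$ near $z$.
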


\begin{proof}
Pick a norm-one Lipschitz function $f:M\longrightarrow \mathbb R$. Pick $x\neq y$ in $M$ such that $\frac{f(x)-f(y)}{d(x,y)}>1-\varepsilon^2$. Now consider an isometry $\alpha:[0,d(x,y)]\longrightarrow M$ such that $\alpha(0)=y$ and $\alpha(d(x,y))=x$. Now
$$1-\varepsilon^2<\frac{f(\alpha(d(x,y)))-f(\alpha(0))}{d(x,y)}=\frac{1}{d(x,y)}\int_0^{d(x,y)} (f\circ \alpha)'(t)\ dt.$$
Now we can find $z\in P(\phi(N))$ such that $(f\circ \alpha)'(z)>1-\varepsilon$ (notice that the denseness of $\phi(N)$ implies that $P(\phi(N))$ contains $]0,d(x,y)[$ because $N$ is connected).  By assumption there exists $x\in N$ such that $P(\phi(x))=z$ and a sequence $x_n$ in $N$ such that $P(\phi(x_n))\rightarrow z$ and such that 
$$\frac{P(\phi(x_n))-P(\phi(x))}{d(x_n,x)}\rightarrow 1.$$
On the other hand, 
$$\frac{(f\circ\alpha)(P(\phi(x_n)))-(f\circ\alpha)(P(\phi(x)))}{P(\phi(x_n))-P(\phi(x))}\rightarrow (f\circ \alpha)'(z)>1-\varepsilon.$$
Consequently
\[
\begin{split}
\Vert C_\phi (f)\Vert&  \geq \limsup \frac{(f\circ\alpha)(P(\phi(x_n)))-(f\circ\alpha)(P(\phi(x)))}{d(x_n,x)}\\
& =\limsup \frac{(f\circ\alpha)(P(\phi(x_n)))-(f\circ\alpha)(P(\phi(x)))}{P(\phi(x_n))-P(\phi(x))} \frac{P(\phi(x_n))-P(\phi(x))}{d(x_n,x)}\\ &  \geq 1-\varepsilon.
\end{split}
\]
Since $\varepsilon>0$ was arbitrary we conclude the desired result.
\end{proof}

Let us end with the following remark.

\begin{remark}
In the proof of Theorem \ref{theo:maincondisufi} the use of inverse projections has been necessary in order to work, in some sense, with directional derivatives of $\phi$ throughout isometric curves $\alpha:[0,1]\longrightarrow M$ and, in order to construct such projections $P$, the assumptions of $M$ being geodesic has been essential. In general, there are conditions on a complete metric space to ensure that $M$ is geodesic, and in the end where Theorems \ref{theo:maincondinece} and \ref{theo:maincondisufi} apply, if we assume $M$ being length (see e.g. \cite[Proposition 2.9]{ikw} or \cite[Corollary 4.4]{gpr}), being particularly interesting the case when $M$ is compact. However, there are length metric spaces with are not geodesic \cite[Example 2.4]{ikw}.  We do not know any sufficient condition for a composition operator $C_\phi$ to be isometric if $M$ is length (and it is not geodesic).
\end{remark}

\textbf{Acknowledgements:} The author is deeply grateful to Antonio Jim\'enez-Vargas for sending him the final version of \cite{jiva}. He also thanks Rafael Chiclana, Luis C. Garc\'ia-Lirola and Colin Petitjean for their comments that improved the final version of the paper.

\end{document}